\def\EquationsBySection{\def\theequation
	{\thesection.\arabic{equation}}%
	\@addtoreset{equation}{section}}
\newtheorem{remark}{Remark}[section]
\newtheorem{theorem}[remark]{Theorem}
\newcommand\old[1]{}
\title[Lane-Emden systems involving the Logarithmic Laplacian] 
      {Symmetry of positive solutions for Lane-Emden systems involving the Logarithmic Laplacian}
      \author{Rong Zhang, Vishvesh Kumar and Michael Ruzhansky}
\address[ Rong Zhang]{School of Mathematical Sciences,
Nanjing Normal University, 
   Nanjing, 210023, China \newline and \newline Department of Mathematics: Analysis, Logic and Discrete Mathematics, Ghent University, Ghent, Belgium}
\email{zhangrong@nnu.edu.cn / rongzhangnnu@163.com}
\address[Vishvesh Kumar]{Department of Mathematics: Analysis, Logic and Discrete Mathematics, Ghent University, Ghent, Belgium}
\email{vishveshmishra@gmail.com / vishvesh.kumar@ugent.be}
\address[Michael Ruzhansky]{Department of Mathematics: Analysis, Logic and Discrete Mathematics, Ghent University, Ghent, Belgium\newline and \newline
School of Mathematical Sciences, Queen Mary University of London, United Kingdom}
\email{michael.ruzhansky@ugent.be}
\keywords{ Fractional Laplacian; Logarithmic Laplacian; Logarithmic Symbol; Symmetry and monotonicity;
The direct method of moving planes; Lane-Emden system.}
\subjclass{ 35R11, 35B06, 35B50, 35B51, 35D30.}
\begin{document}
\maketitle
\allowdisplaybreaks
\begin{abstract}
    We study the Lane-Emden system involving
the logarithmic Laplacian:
$$
\begin{cases}
\ \mathcal{L}_{\Delta}u(x)=v^{p}(x) ,& x\in\mathbb{R}^{n},\\
\ \mathcal{L}_{\Delta}v(x)=u^{q}(x) ,& x\in\mathbb{R}^{n},
\end{cases}
$$
where $p,q>1, n\geq 2$ and $\mathcal{L}_{\Delta}$ denotes the logarithmic Laplacian arising as a formal derivative $\partial_s|_{s=0}(-\Delta)^s$ of fractional Laplacians at $s=0.$ By using a direct method of moving planes for the logarithmic Laplacian, we obtain the symmetry and monotonicity of the positive solutions to the Lane-Emden system.
We also establish some key ingredients needed in order to apply the method of moving planes such as the maximum principle for anti-symmetric functions, the narrow region principle, and decay at infinity. Further,  we discuss such results for a generalized system of the Lane-Emden type involving the logarithmic Laplacian.   

\end{abstract}


\section{Introduction}

Triggered by several significant applications to model physical and natual phenomena and their closed connections with probablity theory and  stochastic processes, there has been a growing interest to study nonlinear nonlocal operators and many analytical and qualitative properties of solutions to the partial differential equations involving those nonlocal operators (see \cite{t1,t2,t3,t4,t5,t6,t7}). One of the most prominent candidates in the list of nonlocal operators of positive differential order is the fractional 
Laplacian on $\mathbb{R}^n$ \cite{t3,chen4,t7,t6,x1,17}. It is certainly near to impossible to mention all the works but we refer to \cite{chen4,cpam,lu,x1,17,r1,r2} and reference therein for many seminal studies in this area of research. 

We recall that, the fractional Laplacian $(-\Delta)^{s}, \,s\in (0, 1)$, on $\mathbb{R}^{n}$ is a nonlocal pseudo-differential operator defined on $u \in C_c^2(\mathbb{R}^n)$ by
\begin{equation}\label{q}
\begin{aligned}
(-\Delta)^{s}u(x)&=C_{n,s}P.V.\int_{\mathbb{R}^{n}}\frac{u(x)-u(y)}{|x-y|^{n+2s}}dy =C_{n,s}\lim_{\epsilon\rightarrow0}\int_{\mathbb{R}^{n}\backslash B_{\epsilon}(x)}\frac{u(x)-u(y)}{|x-y|^{n+2s}}dy,
\end{aligned}
\end{equation}
where  $P.V.$ stands for the Cauchy principal value, $B_{\epsilon}(x)$ is the open ball with centre $x \in \mathbb{R}^{n}$ and radius
$\epsilon>0$, and $$C_{n,s}=2^{2s}\pi^{-\frac{n}{2}\frac{\Gamma(\frac{n+2s}{2})}{-\Gamma(-s)}}=2^{2s}\pi^{-\frac{n}{2}\frac{\Gamma(\frac{n+2s}{2})}{\Gamma(1-s)}}$$ 
is a normalized positive constant depending on $n$ and $s$ (see \cite{17}) so that we have the following definition of  the operator $(-\Delta)^{s}$ via the Euclidean Fourier transform:
$$\widehat{(-\Delta)^{s}u(x)}=|\xi|^{2s}\hat{u}(\xi),\ u\in S,$$
where $\hat{u}$ is the Euclidean Fourier transform of $u$.


The non-locality of the fractional Laplacian makes it difficult to investigate. In order to overcome this difficulty, the extension method was firstly introduced by Caffarelli and Silvestre in \cite{a6}. The main idea of the extension method is to reduce the nonlocal problem to a local one in higher dimensions. Namely, for a function $u:\mathbb{R}^{n}\rightarrow \mathbb{R}$, we consider the extension $U:\mathbb{R}^{n}\times[0,\infty)\rightarrow \mathbb{R}$ that satisfies
$$
\begin{cases}
\ \text{div}(y^{1-2s}\nabla U)=0,& (x,y)\in \mathbb{R}^{n}\times[0,\infty),\\
\ U(x,0)=u(x), & x\in \mathbb{R}^{n}.
\end{cases}
$$
Then we have
$$(-\Delta)^{s}u(x)=-C_{n,s}\lim_{y\rightarrow0^{+}}y^{1-2s}\frac{\partial U}{\partial y},\ x\in\mathbb{R}^{n}.$$

The method of moving planes has
become a powerful tool in studying qualitative properties such as symmetry and monotonocity of solutions to nonlinear elliptic
equations and systems involving nonlocal operators. The method of moving planes, that goes back to Alexandrov and J. Serrin \cite{Serrin}, has been developed by many researchers including the seminal work of Chen \cite{chen91}. We refer to \cite{ GNN, zhuo,jems,ampa,cpa,chen2,chen3,am, Guo, Fra2, Fra1} and references therein for an overview. Of our particular interest is  the following Lane-Emden system in $\mathbb{R}^{n}$:
\begin{equation}\label{p1}
\begin{cases}
\ (-\Delta)^{s} u(x)=v^{p}(x),\\
\ (-\Delta)^{s} v(x)=u^{q}(x).
\end{cases}
\end{equation}

When $s=1$, the well-known Lane-Emden conjecture states that, in the subcritical case (i.e. $\frac{1}{p+1}+\frac{1}{q+1}>\frac{n-2}{n}$), system (\ref{p1}) does not have positive classical solutions. The conjecture is known to be true for radial solutions in all dimensions (see \cite{30,33}).
For non-radial solutions, and $n\leq2$, the conjecture is a consequence of known results (see \cite{31,33}).
It is known that if $(p,q)$ in critical or supercritical case (i.e. $\frac{1}{p+1}+\frac{1}{q+1}\leq\frac{n-2}{n}$), then system (\ref{p1}) admits some positive classical solutions on $\mathbb{R}^{n}$ (see \cite{40}).

When $0<s<1$, by using the direct method of moving planes, Cheng et al. \cite{cheng} obtained that in the subcritical case (i.e. $1<p,q<\frac{n+2s}{n-2s}$), system \eqref{p1} does not have any positive solutions, in the critical case (i.e. $p=q=\frac{n+2s}{n-2s}$), the solutions are radial symmetric about some point in $\mathbb{R}^n$.

In the special case $p=q$, $u=v$, system \eqref{p1} will become
\begin{equation}\label{p2}
(-\Delta)^{s} u(x)=u^{p}(x),\quad x\in \mathbb{R}^{n}.
\end{equation}

Chen, Li and Li \cite{chen1} developed a direct method of moving planes for the fractional Laplacian. By using the method of moving planes, they established symmetry and monotonicity for positive solutions of \eqref{p2}. Namely, in the critical case $p=\frac{n+2s}{n-2s}$, the solution is radially symmetric and monotone decreasing about some point, and in the subcritical case $1<p<\frac{n+2s}{n-2s}$, the equation \eqref{p2} does not have any positive solutions.

The main aim of this paper to begin an investigation about the qualitative properties of solutions to the nonlinear equations and systems involving the logarithmic Laplacian on $\mathbb{R}^n,$ defined  as a formal derivative $\partial_s|_{s=0}(-\Delta)^s$ of fractional Laplacians at $s=0.$ 
 The logarithmic Laplacian $\mathcal{L}_{\Delta}$ was introduced by Chen and Weth
\cite{chen}. On compactly supported Dini continuous functions, the logarithmic Laplacian $\mathcal{L}_{\Delta}$ is defined by the following pointwise evaluation
\begin{equation}\label{a1}
\begin{aligned}
\mathcal{L}_{\Delta}u(x)&=\frac{d}{ds}|_{s=0}[(-\Delta)^{s}u](x)\\
&=C_{n}P.V.\int_{\mathbb{R}^n}\frac{u(x)1_{B_{1}(x)}(y)-u(y)}{|x-y|^{n}}dy+\rho_{n}u(x),
\end{aligned}
\end{equation}
where
$$C_{n}:=\pi^{-\frac{n}{2}}\Gamma(\frac{n}{2}),\ \rho_{n}:=2\log2+\psi(\frac{n}{2})-\gamma.$$
Here $\Gamma$ is the Gamma function, $\psi=\frac{\Gamma'}{\Gamma}$ is the Digamma function, $\gamma=-\Gamma'(1)$ is the Euler Mascheroni constant,  $B_{1}(x)$ is an open ball with $x$ as its center and $1$ as its radius and $1_{B_{1}(x)}$ is the characteristic function of $B_{1}(x)$ in $\mathbb{R}^n$.  We note that (see \cite[Remark 1.2]{chen}) $\rho_n=-2\gamma+\sum\limits_{k=1}^{(n-1)/2} \frac{2}{2k-1}$ for $n$ odd and $\rho_n=2(\log 2-\gamma)+\sum\limits_{k=1}^{(n-2)/2} \frac{1}{k}$ for $n$ even. It is easy to see that $\rho_n \geq 0$ for every $n\geq 2.$

 Throughout the paper, we will assume that $n \geq 2$ so that $\rho_n \geq 0.$ If there is no confusion, we will not write P.V. in the definition of $\mathcal{L}_{\Delta}$ from this point.

Using the expression (\ref{a1}), one can define $\mathcal{L}_{\Delta}$ for a quite  large class of functions $u$. To illustrate this, we define, for $s\in\mathbb{R}$, the space $L_{s}^{1}(\mathbb{R}^{n})$ as the space of locally integrable functions $u:\mathbb{R}^{n}\rightarrow\mathbb{R}$ such that
$$\|u\|_{L_{s}^{1}(\mathbb{R}^{n})}:=\int_{\mathbb{R}^{n}}\frac{|u(x)|}{(1+|x|)^{n+2s}}dx<+\infty.$$
It was proved by \cite[Proposition 1.3]{chen} that for $u\in L_{0}^{1}(\mathbb{R}^{n})$ which is also Dini continuous at some $x\in \mathbb{R}^{n}$, the quantity $[\mathcal{L}_{\Delta}u](x)$ is well defined by the formula (\ref{a1}). Furthermore, as it was shown in \cite{chen}, $\mathcal{L}_{\Delta}$ is a weakly singular operator having an intrinsic scaling property of the logarithmic order with the Fourier symbol $2\log|\xi|.$ They also establish that if  $u\in C_{c}^{\vartheta}(\mathbb{R}^{n})$ for some $\vartheta>0$, then
\begin{equation}\label{a2}
\mathcal{L}_{\Delta}u(x)=\lim_{s\rightarrow0^{+}}\frac{d}{ds}(-\Delta)^{s}u.
\end{equation}

The strong maximum principle for pointwise solution on bounded Lipschitz domain  of $\mathbb{R}^n$ was established by Chen and Weth \cite[Proposition 4.1]{chen}. 
Among other things, they (\cite{chen}) also characterized the asymptotics of principal Dirichlet eigenvalues and eigenfunctions of $(-\Delta)^{s}$ as $s\rightarrow0$ and proved a Faber-Krahn type inequality. The spectral properties of $\mathcal{L}_{\Delta}$ were investigated by Laptev and Weth \cite{LW}. The regularity properties of eigenfunctions of $\mathcal{L}_{\Delta}$ were established in \cite{FJW}. In \cite{HVS}, the authors showed that least-energy solutions of the fractional Dirichlet problem converge to a nontrivial nonnegative
least-energy solution of a limiting problem in terms of the logarithmic Laplacian by using variational methods, uniform energy-derived estimates,
and   a new logarithmic-type Sobolev inequality.
 Very recently, a direct method of moving plane for the logarithmic Laplacian on $\mathbb{R}^n$ was developed by Zhang and Nie \cite{aml} (see \cite{Liu18} for bounded domains). Using this they obtained symmetry and monotonocity of the positive solution to a nonlinear equation involving the logarithmic Laplacian.



In this paper, we consider the following Lane-Emden system involving Logarithmic Laplacian on $\mathbb{R}^n$:
\begin{equation}\label{c1}
\begin{cases}
\ \mathcal{L}_{\Delta}u(x)=v^{p}(x) ,& x\in\mathbb{R}^{n},\\
\ \mathcal{L}_{\Delta}v(x)=u^{q}(x) ,& x\in\mathbb{R}^{n},
\end{cases}
\end{equation}
where $p,q>1$ and $\mathcal{L}_{\Delta}$ is given by \eqref{a1}.    By using a direct method of moving planes for the Logarithmic Laplacian, we obtain the symmetry and monotonicity of the positive solutions to the system \eqref{c1}. The following theorem is the main result of this paper.

\begin{theorem}\label{th1}
Let $u,v\in L_{0}^{1}(\mathbb{R}^{n})$ be a solution of system \eqref{c1} such that
\begin{equation}\label{b2}
u(x)=O\bigg(\frac{1}{|x|^{\alpha}}\bigg),\ 
v(x)=O \bigg(\frac{1}{|x|^{\beta}}\bigg),\quad|x|\rightarrow\infty,
\end{equation}
for some $\alpha,\beta>0$. Then, $u$ and $v$ are radially symmetric and monotone decreasing about some point $x^{o}$ in $\mathbb{R}^{n}$.
\end{theorem}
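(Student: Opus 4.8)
The plan is to run the direct method of moving planes in each direction; by a rotation it is enough to treat the direction $e_1$. We may assume $u,v$ are (Dini) continuous, so that the quantities below are defined pointwise by \eqref{a1}. For $\lambda\in\mathbb{R}$ put $T_\lambda=\{x_1=\lambda\}$, $\Sigma_\lambda=\{x_1<\lambda\}$, let $x^\lambda=(2\lambda-x_1,x_2,\dots,x_n)$ be the reflection of $x\in\Sigma_\lambda$ across $T_\lambda$, write $u_\lambda(x)=u(x^\lambda)$, $v_\lambda(x)=v(x^\lambda)$, and set
\[
U_\lambda(x)=u_\lambda(x)-u(x),\qquad V_\lambda(x)=v_\lambda(x)-v(x),
\]
which are anti-symmetric with respect to $T_\lambda$ and vanish on $T_\lambda$. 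Using \eqref{c1} together with the mean value theorem, at every $x\in\Sigma_\lambda$
\[
\mathcal{L}_{\Delta}U_\lambda(x)=p\,\xi_\lambda^{\,p-1}(x)\,V_\lambda(x),\qquad
\mathcal{L}_{\Delta}V_\lambda(x)=q\,\eta_\lambda^{\,q-1}(x)\,U_\lambda(x),
\]
where $\xi_\lambda(x)$ lies between $v(x)$ and $v_\lambda(x)$ and $\eta_\lambda(x)$ between $u(x)$ and $u_\lambda(x)$; by \eqref{b2} the coefficients $p\xi_\lambda^{p-1},\,q\eta_\lambda^{q-1}$ are bounded on $\mathbb{R}^n$ and tend to $0$ like $|x|^{-\beta(p-1)},\,|x|^{-\alpha(q-1)}$ as $|x|\to\infty$.

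\emph{Start of the procedure.} I claim that $U_\lambda\ge0$ and $V_\lambda\ge0$ in $\Sigma_\lambda$ for all sufficiently negative $\lambda$. Since $u,v,u_\lambda,v_\lambda\to0$ at infinity and $U_\lambda=V_\lambda=0$ on $T_\lambda$, if this failed the negative part of $(U_\lambda,V_\lambda)$ would attain its minimum at some interior point $x^0\in\Sigma_\lambda$ with $|x^0|\ge|\lambda|$; say it is attained by $U_\lambda$, so $U_\lambda(x^0)<0$ and $V_\lambda(x^0)\ge U_\lambda(x^0)$. Then $\mathcal{L}_{\Delta}U_\lambda(x^0)=p\xi_\lambda^{p-1}(x^0)V_\lambda(x^0)\ge p\xi_\lambda^{p-1}(x^0)U_\lambda(x^0)$, whereas the decay at infinity estimate (Theorem~\ref{le6}) applied to the anti-symmetric $U_\lambda$ at its negative minimum gives $\mathcal{L}_{\Delta}U_\lambda(x^0)\le C_0\,U_\lambda(x^0)$ for an absolute constant $C_0>0$ (in fact with a factor growing in $|x^0|$). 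As $U_\lambda(x^0)<0$ this forces $C_0\le p\xi_\lambda^{p-1}(x^0)=O(|x^0|^{-\beta(p-1)})$, impossible once $|\lambda|$ is large. Hence the claim holds and
\[
\lambda_0:=\sup\bigl\{\lambda\in\mathbb{R}:\ U_\mu\ge0\ \text{and}\ V_\mu\ge0\ \text{in}\ \Sigma_\mu\ \text{for all}\ \mu\le\lambda\bigr\}
\]
is well defined. If $\lambda_0=+\infty$, then fixing $x$ and letting $\lambda\to\infty$ in $u(x^\lambda)\ge u(x)$ gives $u(x)\le0$, contradicting positivity; so $\lambda_0<\infty$ and, by continuity, $U_{\lambda_0},V_{\lambda_0}\ge0$ in $\Sigma_{\lambda_0}$.

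\emph{$\lambda_0$ is a plane of symmetry.} From $\mathcal{L}_{\Delta}U_{\lambda_0}=p\xi_{\lambda_0}^{p-1}V_{\lambda_0}\ge0$ and $\mathcal{L}_{\Delta}V_{\lambda_0}=q\eta_{\lambda_0}^{q-1}U_{\lambda_0}\ge0$ in $\Sigma_{\lambda_0}$, the strong maximum principle for anti-symmetric functions (Theorem~\ref{le4}) shows each of $U_{\lambda_0},V_{\lambda_0}$ is either $\equiv0$ or $>0$ in $\Sigma_{\lambda_0}$, and if one is positive the corresponding equation forces the other to be positive; thus either $U_{\lambda_0}\equiv V_{\lambda_0}\equiv0$ or $U_{\lambda_0},V_{\lambda_0}>0$ in $\Sigma_{\lambda_0}$. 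Suppose the latter, for contradiction. Fix $R_0$ from Theorem~\ref{le6}, pick $R>R_0$ and $\delta>0$ small, and set $K:=\overline{B_R}\cap\{x_1\le\lambda_0-\delta\}\subset\Sigma_{\lambda_0}$, so $\min_K(U_{\lambda_0},V_{\lambda_0})\ge c_0>0$; by continuity in $\lambda$ there is $\varepsilon_0\in(0,\delta)$ with $U_\lambda,V_\lambda>0$ on $K$ for all $\lambda\in(\lambda_0,\lambda_0+\varepsilon_0)$. For such $\lambda$, any negative value of $(U_\lambda,V_\lambda)$ on $\Sigma_\lambda$ would have its minimum attained at some $x^0\in\Sigma_\lambda\setminus K$; if $|x^0|>R_0$ we get a contradiction exactly as in the previous step, while if $|x^0|\le R_0$ then $x^0$ lies in the bounded slab $\{|x|\le R_0,\ \lambda_0-\delta<x_1<\lambda\}$ of width $<\delta+\varepsilon_0$, where the narrow region principle for anti-symmetric functions (Theorem~\ref{le5}), applied to the pair $(U_\lambda,V_\lambda)$ with its bounded coefficients, yields a contradiction once $\delta+\varepsilon_0$ is small. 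Hence $U_\lambda,V_\lambda\ge0$ in $\Sigma_\lambda$ for all $\lambda<\lambda_0+\varepsilon_0$, contradicting the definition of $\lambda_0$. Therefore $U_{\lambda_0}\equiv V_{\lambda_0}\equiv0$, i.e. $u,v$ are symmetric about $T_{\lambda_0}$, and from $U_\lambda,V_\lambda\ge0$ in $\Sigma_\lambda$ for all $\lambda<\lambda_0$ they are non-decreasing in $x_1$ on $\Sigma_{\lambda_0}$. Since $e_1$ was arbitrary and the procedure moves the coupled pair simultaneously, $u$ and $v$ are symmetric and monotone about a common hyperplane orthogonal to every direction; these hyperplanes meet at a single point $x^o$, whence $u$ and $v$ are radially symmetric and monotone decreasing about $x^o$.

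The principal difficulty is the decay-at-infinity step and its reuse inside the slab: because the Fourier symbol of $\mathcal{L}_{\Delta}$ is only $2\log|\xi|$, the quantitative gain of the nonlocal operator at a far-away negative minimum of an anti-symmetric function is weak, so one must carefully balance the non-decaying (indeed slowly growing) lower bound of Theorem~\ref{le6} against the polynomial decay of the nonlinear coefficients coming from \eqref{b2}; this interplay is precisely what makes the hypothesis \eqref{b2} with an arbitrary positive decay rate the natural one. A secondary but pervasive point, visible above, is that the strong maximum principle and the narrow region principle must be applied to the coupled pair $(U_\lambda,V_\lambda)$ — always estimating whichever of the two is the more negative at the minimizing point — rather than to a single function.
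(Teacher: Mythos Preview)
Your proof follows the same moving-plane strategy as the paper's own argument, invoking the same three ingredients (Theorems \ref{le4}, \ref{le5}, \ref{le6}) in the same order: start the plane at $-\infty$ via decay at infinity, push to the critical position $\lambda_0$, and rule out strict positivity of $(U_{\lambda_0},V_{\lambda_0})$ by combining continuity on a compact set with the narrow region principle.  Your handling of the coupling by looking at whichever of $U_\lambda,V_\lambda$ realises the joint minimum is a clean variant of the paper's case split $V_\lambda(x^0)\ge 0$ versus $V_\lambda(x^0)<0$, and your direct argument that $\lambda_0<+\infty$ is tidier than the paper's somewhat circular definition $\lambda_0=\sup\{\lambda<x_1^{o}\mid\dots\}$.

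One inaccuracy to fix: the assertion that ``by \eqref{b2} the coefficients $p\xi_\lambda^{p-1}$, $q\eta_\lambda^{q-1}$ \dots\ tend to $0$ like $|x|^{-\beta(p-1)}$, $|x|^{-\alpha(q-1)}$'' is not true as stated, because $\xi_\lambda(x)$ lies between $v(x)$ and $v(x^\lambda)$, and $|x^\lambda|$ need not be large when $|x|$ is (e.g.\ $\lambda=-10$, $x=(-20,0,\dots,0)$ gives $x^\lambda=0$).  The paper avoids this by taking, at a point where $V_\lambda<0$, the coefficient $c_1(x)=-p\,v^{p-1}(x)$ (so that $\mathcal{L}_\Delta U_\lambda\ge -c_1 V_\lambda$ there), which genuinely decays by \eqref{b2}.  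Your argument is still salvageable without this change: at the joint minimiser $x^0$ either $V_\lambda(x^0)\ge 0$, in which case $\mathcal{L}_\Delta U_\lambda(x^0)\ge 0$ already contradicts $\mathcal{L}_\Delta U_\lambda(x^0)\le C_0\,U_\lambda(x^0)<0$, or $V_\lambda(x^0)<0$, in which case $v_\lambda(x^0)<v(x^0)$ forces $\xi_\lambda(x^0)\le v(x^0)=O(|x^0|^{-\beta})$ and the decay you need does hold \emph{at that point}.  Just rewrite that sentence accordingly (or switch to the paper's choice of coefficients), and the rest stands.
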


Now, we consider the generalized Lane-Emden elliptic system including $m$ equations which generalizes the system \eqref{c1} as follows:
\begin{equation}\label{m}
\begin{cases}
\ \mathcal{L}_{\Delta}u_{1}(x)=u_{2}^{p_{1}}(x) ,& x\in\mathbb{R}^{n},\\
\ \mathcal{L}_{\Delta}u_{2}(x)=u_{3}^{p_{2}}(x) ,& x\in\mathbb{R}^{n},\\
\ \qquad\qquad \cdot\\
\ \qquad\qquad\cdot\\
\ \qquad\qquad\cdot\\
\ \mathcal{L}_{\Delta}u_{m}(x)=u_{1}^{p_{m}}(x) ,& x\in\mathbb{R}^{n},
\end{cases}
\end{equation}
where $p_{i}>1, i=1,\cdot\cdot\cdot,m$. 
We have the following result concerning the radial symmetry and monotonocity of solutions to the system \eqref{m}.

\begin{theorem}\label{thm}
Let $u_{i}\in L_{0}^{1}(\mathbb{R}^{n}), i=1,\cdot\cdot\cdot,m$ be a solution of system \eqref{m} such that
\begin{equation}\label{n}
u_{i}(x)=O\bigg(\frac{1}{|x|^{\alpha_{i}}}\bigg),\ for\ some \ \alpha_{i}>0, \quad i=1,\cdot\cdot\cdot,m .
\end{equation}
Then, $u_{i}$ are radially symmetric and monotone decreasing about some point $x^{o}$ in $\mathbb{R}^{n}$.
\end{theorem}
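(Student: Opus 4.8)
The plan is to mirror the proof of Theorem \ref{th1}, using the direct method of moving planes, but now carried along a cyclic chain of $m$ functions. For $\lambda\in\mathbb{R}$ set $T_\lambda=\{x\in\mathbb{R}^n:x_1=\lambda\}$, let $x^\lambda=(2\lambda-x_1,x_2,\dots,x_n)$ be the reflection of $x$ across $T_\lambda$, write $\Sigma_\lambda=\{x:x_1<\lambda\}$, and for each $i$ define the anti-symmetric comparison function $w_i^\lambda(x)=u_i(x^\lambda)-u_i(x)$. Applying $\mathcal{L}_\Delta$ and using the cyclic structure of \eqref{m} together with the mean value theorem, one obtains in $\Sigma_\lambda$ a system of the form $\mathcal{L}_\Delta w_i^\lambda(x)=c_i(x)\,w_{i+1}^\lambda(x)$ (indices mod $m$), where $c_i(x)=p_i\,\xi_i^{p_i-1}(x)\ge 0$ for some intermediate value $\xi_i(x)$ between $u_{i+1}(x^\lambda)$ and $u_{i+1}(x)$; since the $u_i$ are bounded near infinity and behave like $O(|x|^{-\alpha_i})$, each $c_i(x)=O(|x|^{-\alpha_i(p_i-1)})\to 0$ as $|x|\to\infty$.

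First I would establish the \emph{starting position}: for $\lambda$ sufficiently negative, $w_i^\lambda(x)\ge 0$ in $\Sigma_\lambda$ for all $i$. Suppose not; then some $w_i^\lambda$ is negative somewhere in $\Sigma_\lambda$, and since $w_i^\lambda\to 0$ at infinity (by the decay hypothesis \eqref{n}) and $w_i^\lambda=0$ on $T_\lambda$, it attains a negative minimum at some $x^0\in\Sigma_\lambda$. Feeding this into the decay-at-infinity estimate (Theorem \ref{le6}), applied to the chain and using that all $c_i\to 0$, forces a contradiction once $|x^0|$ — equivalently $|\lambda|$ — is large; this is where the polynomial decay rates $\alpha_i$ and the exponents $p_i>1$ enter, exactly as in the two-component case. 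Hence the set $\Lambda=\{\lambda:w_i^\mu\ge 0\text{ in }\Sigma_\mu\text{ for all }i\text{ and all }\mu\le\lambda\}$ is nonempty.

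Next I would run the \emph{moving step}. Let $\lambda_0=\sup\Lambda$ and argue $\lambda_0=+\infty$ is impossible unless the solution is already symmetric: if $\lambda_0<\infty$, then by continuity $w_i^{\lambda_0}\ge 0$ in $\Sigma_{\lambda_0}$ for all $i$; applying the strong maximum principle for anti-symmetric functions (Theorem \ref{le4}) around the chain, either some $w_i^{\lambda_0}\equiv 0$ — which by cycling through the equations forces $w_j^{\lambda_0}\equiv 0$ for all $j$, i.e. symmetry about $T_{\lambda_0}$ — or $w_i^{\lambda_0}>0$ in the interior of $\Sigma_{\lambda_0}$ for every $i$. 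In the latter case I would show the plane can be pushed further: combining the narrow region principle (the bounded-domain maximum principle of Theorem \ref{le4} applied on a thin strip adjacent to $T_{\lambda_0}$, where $h_{\mathfrak{O}}+\rho_n\ge 0$ by the standing assumption) with the decay estimate on the complementary large region, one gets $w_i^{\lambda_0+\varepsilon}\ge 0$ in $\Sigma_{\lambda_0+\varepsilon}$ for small $\varepsilon>0$, contradicting the definition of $\lambda_0$. Therefore the solution is symmetric about some hyperplane $T_{\lambda_0}$ in the $x_1$-direction, and monotonicity in $x_1$ on $\Sigma_{\lambda_0}$ follows from $w_i^\lambda\ge 0$ for $\lambda<\lambda_0$.

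Finally, since the $x_1$-direction was arbitrary, the same argument in every direction yields a hyperplane of symmetry through a common point; the intersection of all such hyperplanes is a single point $x^o$, and radial symmetry and radial monotone decrease about $x^o$ follow. The main obstacle is the bookkeeping in propagating the maximum-principle conclusions cyclically through all $m$ components — one must verify that a single $w_i^{\lambda_0}\equiv 0$ genuinely forces all of them to vanish (using $u_j>0$ and $p_j>1$ so the coefficients $c_j$ are strictly positive where the functions are positive), and that the decay exponents combine correctly along the chain so that Theorem \ref{le6} still applies; apart from this, the proof is a routine adaptation of the $m=2$ case handled in Theorem \ref{th1}.
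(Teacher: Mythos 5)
Your proposal is correct and follows essentially the same route as the paper: define the anti-symmetric differences $W_{i,\lambda}=u_i(x^\lambda)-u_i(x)$, linearize the cyclic nonlinearity via the mean value theorem to get $\mathcal{L}_\Delta W_{i,\lambda}+c_i W_{i+1,\lambda}\ge 0$ (indices cyclic) with $c_i(x)=-p_i u_{i+1}^{p_i-1}(x)\le 0$ and $c_i(x)=O(|x|^{-\alpha_{i+1}(p_i-1)})$, apply the decay-at-infinity lemma (Theorem~\ref{le6}) to obtain a starting position $\lambda\le -R_o$, and then push the plane to its critical position using the narrow-region principle together with the strong maximum principle, exactly as in the proof of Theorem~\ref{th1}. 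One small slip: in your moving step you attribute the narrow-region principle to Theorem~\ref{le4}, but that is Theorem~\ref{le5}; Theorem~\ref{le4} is the anti-symmetric maximum principle used to derive the dichotomy ``$W_{i,\lambda_0}\equiv 0$ for all $i$ or $W_{i,\lambda_0}>0$ for all $i$''. Also note your sign convention for $c_i$ (you take $c_i\ge 0$ and write an equality) is opposite to that required by the hypotheses of Theorems~\ref{le5}--\ref{le6}; the argument is unaffected once the inequality is rearranged, but the $c_i$ fed into those theorems must be nonpositive.
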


The paper is organised as follows: In Section 3, we present 
some key ingredients needed in the process of applying the methods of moving planes such as the maximum principle for anti-symmetric functions, narrow region principle and decay at infinity. In Section 4, by using a direct method of moving planes, we establish  the proofs of Theorem
\ref{th1} and Theorem \ref{thm}.

\section{Key ingredients for the method of moving planes}
\label{keying}
This section is devoted to developing basic and key results needed to apply the method of moving planes for establishing the proof of our main result in the next section. 

We first present some basic notation and nomenclatures which will be beneficial for the rest of the paper. 

Select any direction as the $x_{1}$-direction, consider the $x_{1}$-direction, for $x=(x_{1},\cdot\cdot\cdot,x_{n})=(x_{1},x')\in\mathbb{R}^{n}$. Denote the moving planes
$$T_{\lambda}=\{x\in\mathbb{R}^{n}\mid x_{1}=\lambda,\ \lambda\in\mathbb{R}\}$$
and the region to the left by
$$\Sigma_{\lambda}=\{x\in\mathbb{R}^{n}\mid x_{1}<\lambda\}.$$ 
Let
$$x^{\lambda}=(2\lambda-x_{1},x')$$
is the reflection of the point $x=(x_{1},x')$ about the plane $T_{\lambda}$.

Define
$$u_{\lambda}(x):=u(x^{\lambda}),\quad v_{\lambda}(x):=v(x^{\lambda}),$$
and denote
$$U_{\lambda}(x)=u_{\lambda}(x)-u(x),\quad V_{\lambda}(x)=v_{\lambda}(x)-v(x).$$
It is easy to see that $U_\lambda$ and $V_\lambda$ are anti-symmetric functions, that is, $U_\lambda(x)=-U_\lambda(x^\lambda)$ and $V_\lambda(x)=-V_\lambda(x^\lambda).$

Now,  we will prove the maximum principles for the logarithmic Laplacian.
\begin{theorem} \label{le4} (Maximum principle for anti-symmetric functions) Let $\Omega$ be  a bounded Lipschitz domain in $\Sigma_{\lambda}.$  Suppose that $U_{\lambda} \in  L_{0}^{1}(\Omega)$ and $V_{\lambda} \in  L_{0}^{1}(\Omega)$ are Dini continuous functions in $\Omega$ and  continuous functions in $\bar{\Omega}$ such that the pair $(U_\lambda, V_\lambda)$ satisfies 
\begin{equation}\label{a5}
\begin{cases}
\ \mathcal{L}_{\Delta}U_{\lambda}(x)\geq0 ,& in\ \Omega,\\
\ \mathcal{L}_{\Delta}V_{\lambda}(x)\geq0 ,& in\ \Omega,\\
\ U_{\lambda}(x)\geq0 , V_{\lambda}(x)\geq0,& in\ \Sigma_{\lambda}\backslash \Omega,\\
\ U_{\lambda}(x^{\lambda})=-U(x), & in\  \Sigma_{\lambda},\\
\ V_{\lambda}(x^{\lambda})=-V(x), & in\  \Sigma_{\lambda}.
\end{cases}
\end{equation}
Then we have 
\begin{equation}\label{a6}
U_{\lambda}(x)\geq0,\ and\ V_{\lambda}(x)\geq0,\quad in\ \Omega.
\end{equation}

If $U_{\lambda}(x)$ and $V_{\lambda}(x)$ attain $0$ somewhere in
$\Omega$, then
$$U_{\lambda}(x)=V_{\lambda}(x)\equiv0,\quad a.e.\  \text{for}\ x\in\ \mathbb{R}^{n}.$$

\end{theorem}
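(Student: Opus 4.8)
The plan is to reduce to a scalar statement and then carry out the usual moving-plane contradiction argument, but evaluating $\mathcal{L}_{\Delta}$ via the representation \eqref{a3} with the \emph{bounded} open set $\mathfrak{O}=\Omega$, so that every integral that appears is absolutely convergent. Note first that the two differential inequalities in \eqref{a5}, and the two exterior sign conditions, are uncoupled for $U_{\lambda}$ and $V_{\lambda}$; hence it is enough to prove the scalar version: if $W\in L_{0}^{1}(\mathbb{R}^{n})$ is anti-symmetric about $T_{\lambda}$, Dini continuous in $\Omega$ and continuous on $\bar{\Omega}$, with $\mathcal{L}_{\Delta}W\geq0$ in $\Omega$ and $W\geq0$ in $\Sigma_{\lambda}\backslash\Omega$, then $W\geq0$ in $\Omega$, and $W$ vanishing at one interior point forces $W\equiv0$ a.e.; one then applies this with $W=U_{\lambda}$ and $W=V_{\lambda}$. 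I will also use the elementary monotonicity $h_{\Omega}(x)\geq h_{\Sigma_{\lambda}}(x)$ for $\Omega\subset\Sigma_{\lambda}$ (enlarging $\mathfrak{O}$ shrinks the first integral and enlarges the second in \eqref{a4}), so that the standing hypothesis yields $h_{\Omega}(x)+\rho_{n}\geq0$ on $\Omega$.

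Suppose, for contradiction, that $W<0$ somewhere in $\Omega$. Since $W$ is continuous on the compact set $\bar{\Omega}$ and $W\geq0$ on $\partial\Omega$ (by the exterior condition, together with $W=0$ on $T_{\lambda}$ by anti-symmetry), the negative minimum is attained at an interior point $x^{0}\in\Omega$, and then $W(y)\geq W(x^{0})$ for all $y\in\Sigma_{\lambda}$, with strict inequality throughout $\Sigma_{\lambda}\backslash\Omega$. Now write $\mathcal{L}_{\Delta}W(x^{0})$ from \eqref{a3} with $\mathfrak{O}=\Omega$, split the exterior integral along $\mathbb{R}^{n}\backslash\Omega=(\Sigma_{\lambda}\backslash\Omega)\cup(\mathbb{R}^{n}\backslash\Sigma_{\lambda})$, and use $W(y^{\lambda})=-W(y)$ to replace $-C_{n}\int_{\mathbb{R}^{n}\backslash\Sigma_{\lambda}}\frac{W(y)}{|x^{0}-y|^{n}}\,dy$ by $C_{n}\int_{\Sigma_{\lambda}}\frac{W(y)}{|x^{0}-y^{\lambda}|^{n}}\,dy$. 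Recombining over $\Omega$ and over $\Sigma_{\lambda}\backslash\Omega$ gives
\[
\mathcal{L}_{\Delta}W(x^{0})=C_{n}\!\int_{\Omega}\!\Big[(W(x^{0})-W(y))\Big(\tfrac{1}{|x^{0}-y|^{n}}-\tfrac{1}{|x^{0}-y^{\lambda}|^{n}}\Big)+\tfrac{W(x^{0})}{|x^{0}-y^{\lambda}|^{n}}\Big]dy-C_{n}\!\int_{\Sigma_{\lambda}\backslash\Omega}\!W(y)\Big(\tfrac{1}{|x^{0}-y|^{n}}-\tfrac{1}{|x^{0}-y^{\lambda}|^{n}}\Big)dy+[h_{\Omega}(x^{0})+\rho_{n}]W(x^{0}).
\]
By the elementary geometric fact $|x^{0}-y^{\lambda}|>|x^{0}-y|$ for $x^{0},y\in\Sigma_{\lambda}$ (with $a=\lambda-x^{0}_{1}>0$, $b=\lambda-y_{1}>0$ one has $|x^{0}-y^{\lambda}|^{2}-|x^{0}-y|^{2}=4ab>0$), each reflected kernel $\tfrac{1}{|x^{0}-y|^{n}}-\tfrac{1}{|x^{0}-y^{\lambda}|^{n}}$ is positive; combined with $W(x^{0})-W(y)\leq0$ on $\Omega$, $W\geq0$ on $\Sigma_{\lambda}\backslash\Omega$, $W(x^{0})<0$ and $h_{\Omega}(x^{0})+\rho_{n}\geq0$, every term on the right is $\leq0$, and the piece $C_{n}W(x^{0})\int_{\Omega}|x^{0}-y^{\lambda}|^{-n}\,dy$ is \emph{strictly} negative. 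Hence $\mathcal{L}_{\Delta}W(x^{0})<0$, contradicting $\mathcal{L}_{\Delta}W(x^{0})\geq0$; therefore $W\geq0$ in $\Omega$.

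For the strong statement, if $W(x^{1})=0$ at some $x^{1}\in\Omega$, then (since $W\geq0$ on all of $\Sigma_{\lambda}$ by what we just proved) $x^{1}$ is a minimizer of $W$ over $\Sigma_{\lambda}$, and the same computation with $W(x^{1})=0$ reduces to $\mathcal{L}_{\Delta}W(x^{1})=-C_{n}\int_{\Sigma_{\lambda}}W(y)\big(\tfrac{1}{|x^{1}-y|^{n}}-\tfrac{1}{|x^{1}-y^{\lambda}|^{n}}\big)dy\leq0$, so $\mathcal{L}_{\Delta}W(x^{1})=0$ and the integrand vanishes a.e.; since the kernel is positive for a.e. $y$, $W\equiv0$ a.e. in $\Sigma_{\lambda}$, hence a.e. in $\mathbb{R}^{n}$ by anti-symmetry. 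Applying this to $U_{\lambda}$ and to $V_{\lambda}$ yields the final assertion.

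The argument has no serious analytic obstacle: the local singularity at $y=x^{0}$ is controlled by Dini continuity of $W$ at $x^{0}$, the far-field terms converge because $W=u_{\lambda}-u\in L_{0}^{1}(\mathbb{R}^{n})$ and the kernels decay like $|y|^{-n}$ (indeed the reflected kernel $\tfrac{1}{|x^{0}-y|^{n}}-\tfrac{1}{|x^{0}-y^{\lambda}|^{n}}$ decays one power faster), and the substitution $y\mapsto y^{\lambda}$ is an isometry of $\mathbb{R}^{n}$. The one point that must be handled with care is that $h_{\Sigma_{\lambda}}$ is \emph{not} finite on the unbounded half-space, so one cannot take $\mathfrak{O}=\Sigma_{\lambda}$ in \eqref{a3}; passing to the bounded set $\mathfrak{O}=\Omega$ and invoking $h_{\Omega}\geq h_{\Sigma_{\lambda}}$ sidesteps this, after which the sign bookkeeping above is routine.
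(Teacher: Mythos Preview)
Your argument follows the same strategy as the paper---evaluate $\mathcal{L}_\Delta$ at a hypothetical negative interior minimum, exploit anti-symmetry together with the kernel inequality $|x^{0}-y|<|x^{0}-y^{\lambda}|$ for $x^{0},y\in\Sigma_{\lambda}$, and derive a strict negativity contradicting \eqref{a5}; the strong conclusion is obtained by the same equality-case analysis. The one substantive difference is the choice of $\mathfrak{O}$ in representation \eqref{a3}: the paper takes $\mathfrak{O}=\Sigma_{\lambda}$ and arrives at the compact bound
\[
\mathcal{L}_{\Delta}U_{\lambda}(x^{o})\leq C_{n}U_{\lambda}(x^{o})\int_{\Sigma_{\lambda}}\frac{dy}{|x^{o}-y^{\lambda}|^{n}}+[h_{\Sigma_{\lambda}}(x^{o})+\rho_{n}]U_{\lambda}(x^{o}),
\]
whereas you take $\mathfrak{O}=\Omega$ and invoke the monotonicity $h_{\Omega}\geq h_{\Sigma_{\lambda}}$. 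Your choice is a genuine technical refinement: as you correctly observe, $h_{\Sigma_{\lambda}}$ is not finite on the half-space, and in fact both the first integral in \eqref{a7} (which carries the nonzero constant $U_{\lambda}(x^{o})$ over the unbounded region $\Sigma_{\lambda}$) and the final integral $\int_{\Sigma_{\lambda}}|x^{o}-y^{\lambda}|^{-n}\,dy$ diverge logarithmically, so the paper's manipulation is only formally valid. Working over the bounded set $\Omega$ and pairing the $\Sigma_{\lambda}\backslash\Omega$ contribution with its reflection, as you do, makes every term absolutely convergent; the ensuing sign bookkeeping is then identical to the paper's, and your explicit reduction to the scalar case is a harmless simplification since the two inequalities in \eqref{a5} are uncoupled.
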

\begin{proof}
Suppose, if possible, that (\ref{a6}) does not hold, so that there exists a $x^{o}\in\bar{\Omega}$ such that
$U_{\lambda}(x^{o})=\min\limits_{x\in\bar{\Omega}}U_{\lambda}(x)<0\,\,\text{or}\,\,
V_{\lambda}(x^{o})=\min\limits_{x\in\bar{\Omega}}V_{\lambda}(x)<0.$
We  can further deduce from (\ref{a5}) that $x^{o}$ is in the interior of $\Omega$. If $U_{\lambda}(x^{o})<0,$ it follows from \eqref{a1} that

\begin{align}\label{a7} \nonumber
\mathcal{L}_{\Delta}U_{\lambda}(x^{o})&=C_{n}\int_{\Sigma_{\lambda}}\frac{U_{\lambda}(x^{o})1_{B_{1}(x^{o})}(y)-U_{\lambda}(y)}{|x^{o}-y|^{n}}dy \\& \quad\quad\quad
+C_{n}\int_{\mathbb{R}^n \backslash \Sigma_{\lambda}}\frac{U_{\lambda}(x^{o})1_{B_{1}(x^{o})}(y)-U_{\lambda}(y)}{|x^{o}-y|^{n}}dy+\rho_{n}U_{\lambda}(x^{o})\\
&=C_{n}\int_{\Sigma_{\lambda}}\frac{U_{\lambda}(x^{o})1_{B_{1}(x^{o})}(y)-U_{\lambda}(y)}{|x^{o}-y|^{n}}dy \nonumber\\& \quad\quad\quad
+C_{n}\int_{ \Sigma_{\lambda}}\frac{U_{\lambda}(x^{o})1_{B_{1}(x^{o})}(y^\lambda)-U_{\lambda}(y^\lambda)}{|x^{o}-y^\lambda|^{n}}dy+\rho_{n}U_{\lambda}(x^{o})\nonumber\\ 
&\leq C_{n}\int_{\Sigma_{\lambda}}\frac{U_{\lambda}(x^{o})1_{B_{1}(x^{o})}(y)-U_{\lambda}(y)}{|x^{o}-y^\lambda|^{n}}dy \nonumber \\& \quad\quad\quad
+C_{n}\int_{ \Sigma_{\lambda}}\frac{U_{\lambda}(x^{o})1_{B_{1}(x^{o})}(y^\lambda)+U_{\lambda}(y)}{|x^{o}-y^\lambda|^{n}}dy+\rho_{n}U_{\lambda}(x^{o})\nonumber\\
&\leq C_{n}\int_{\Sigma_{\lambda}}\frac{U_{\lambda}(x^{o}) (1_{B_{1}(x^{o})}(y)+1_{B_{1}(x^{o})}(y^\lambda))}{|x^{o}-y^{\lambda}|^{n}}dy+\rho_{n}U_{\lambda}(x^{o})<0, \nonumber
\end{align}
which contradicts the inequality (\ref{a5}).

Similarly, for $V_{\lambda}(x^{o})<0$, we also reach at contradiction with (\ref{a5}). This verifies (\ref{a6}).

Now, we have proved that $U_{\lambda}(x)\geq0$, and $V_{\lambda}(x)\geq0$ in $\Sigma_{\lambda}$. If there exists some point $x^{o}\in\Omega$, such that $U_{\lambda}(x^{o})=0$ and $V_{\lambda}(x^{o})=0$, then from (\ref{a7}) we obtain
\begin{equation}\label{a8}
\begin{aligned}
0&\leq\mathcal{L}_{\Delta}U_{\lambda}(x^{o})=C_{n}\int_{\Sigma_{\lambda}}\frac{-U_{\lambda}(y)}{|x^{o}-y|^{n}}dy
-C_{n}\int_{\mathbb{R}^{n}\backslash \Sigma_{\lambda}}\frac{U_{\lambda}(y)}{|x^{o}-y|^{n}}dy=C_{n}\int_{\mathbb{R}^{n}}\frac{-U_{\lambda}(y)}{|x^{o}-y|^{n}}dy,
\end{aligned}
\end{equation}
and
\begin{equation}\label{a9}
0\leq\mathcal{L}_{\Delta}V_{\lambda}(x^{o})=C_{n}\int_{\mathbb{R}^{n}}\frac{-V_{\lambda}(y)}{|x^{o}-y|^{n}}dy.
\end{equation}
We derive immediately using $U_\lambda\geq 0$ and $V_\lambda\geq 0$ that
$$U_{\lambda}(x)=V_{\lambda}(x)\equiv0,\quad a.e.\ in\ \mathbb{R}^{n}.$$

This completes the proof.
\end{proof}

The following narrow region principle will  also be necessary for proving subsequent results.
\begin{theorem} \label{le5} (Narrow region principle)
Let $\Omega$ be a bounded (narrow) Lipschitz domain in $\Sigma_{\lambda}$ such that it is contained in $\{x \in \mathbb{R}^n\mid\lambda-l<x_{1}<\lambda\}$ with small $l.$  Suppose that $c_{i}(x),i=1,2$, are both bounded from below in $\Omega$ with $c_2(x) \leq 0$. Suppose that $U_{\lambda} \in  L_{0}^{1}(\Omega)$ and $V_{\lambda} \in  L_{0}^{1}(\Omega)$ are Dini continuous functions in $\Omega$ and  continuous functions in $\bar{\Omega}$ such that they satisfy 
\begin{equation}\label{a10}
\begin{cases}
\ \mathcal{L}_{\Delta}U_{\lambda}(x)+c_{1}(x)V_{\lambda}(x)\geq0 ,& in\ \Omega,\\
\ \mathcal{L}_{\Delta}V_{\lambda}(x)+c_{2}(x)U_{\lambda}(x)\geq0 ,& in\ \Omega,\\
\ U_{\lambda}(x)\geq0 ,\  V_{\lambda}(x)\geq0,& in\ \Sigma_{\lambda}\backslash \Omega,\\
\ U_{\lambda}(x^{\lambda})=-U(x), V_{\lambda}(x^{\lambda})=-V(x), & in\  \Sigma_{\lambda},
\end{cases}
\end{equation}
then, for sufficiently small $l$, we have
\begin{equation}\label{a11}
U_{\lambda}(x)\geq0,\ and\ V_{\lambda}(x)\geq0,\quad in\ \Omega.
\end{equation}

Furthermore, if $U_{\lambda}(x)$ and $V_{\lambda}(x)$ attain $0$ somewhere in
$\Omega$, then
\begin{equation}
    U_{\lambda}(x)=V_{\lambda}(x)\equiv0,\quad a.e.\ in\ \mathbb{R}^{n}.
\end{equation}
\end{theorem}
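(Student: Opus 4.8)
The plan is to argue by contradiction, along the lines of the proof of Theorem~\ref{le4} but now keeping the coupling terms $c_1V_\lambda,\,c_2U_\lambda$ and absorbing them by combining the smallness of $l$ with the sign hypothesis $c_2\le0$. Assume \eqref{a11} fails. Since $U_\lambda,V_\lambda$ are continuous on the compact set $\bar\Omega$, vanish on $\bar\Omega\cap T_\lambda$ by anti-symmetry, and are nonnegative on $\Sigma_\lambda\setminus\Omega$, the number $m:=\min_{\bar\Omega}\{U_\lambda,V_\lambda\}$ is strictly negative and is attained at an interior point $\bar x\in\Omega$; let $W\in\{U_\lambda,V_\lambda\}$ realize it, so that $W(\bar x)=m\le W(y)$ for all $y\in\Sigma_\lambda$. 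Exactly as in \eqref{a7}, using the representation \eqref{a3} with $\mathfrak O=\Sigma_\lambda$, reflecting the exterior integral through $T_\lambda$, and invoking $W(y^\lambda)=-W(y)$, one rewrites
\[
\mathcal L_\Delta W(\bar x)=C_n\!\int_{\Sigma_\lambda}\!\bigl(W(\bar x)-W(y)\bigr)\Bigl(\tfrac{1}{|\bar x-y|^n}-\tfrac{1}{|\bar x-y^\lambda|^n}\Bigr)dy+P(\bar x)\,W(\bar x),\qquad P(\bar x):=h_{\Sigma_\lambda}(\bar x)+\rho_n+C_n\!\int_{\Sigma_\lambda}\!\tfrac{dy}{|\bar x-y^\lambda|^n}.
\]
The kernel difference is nonnegative on $\Sigma_\lambda\times\Sigma_\lambda$ while $W(\bar x)-W(y)\le0$ for every $y\in\Sigma_\lambda$, so the integral is $\le0$; since $P(\bar x)\ge h_{\Sigma_\lambda}(\bar x)+\rho_n\ge0$ by the standing assumption and $W(\bar x)<0$, this gives $\mathcal L_\Delta W(\bar x)\le P(\bar x)\,W(\bar x)<0$.

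The crux is the quantitative refinement of this bound when $\Omega$ is thin. As $\Omega\subset\{\lambda-l<x_1<\lambda\}$, the point $\bar x$ lies at distance $d:=\lambda-\bar x_1\in(0,l)$ of $T_\lambda$, hence within distance $l$ of $\{x_1>\lambda\}$; changing variables, $C_n\int_{\Sigma_\lambda}|\bar x-y^\lambda|^{-n}dy=C_n\int_{\{z_1>\lambda\}}|\bar x-z|^{-n}dz$, and the mass of this singular kernel over $\{z_1>\lambda\}$ grows like $\log(1/d)$ as $d\to0^+$. Following the cancellation of this growth against the corresponding divergent term hidden in $h_{\Sigma_\lambda}(\bar x)$ --- so that $P(\bar x)$ stays finite while the growth survives --- one obtains
\[
P(\bar x)\ \ge\ \kappa(l)\qquad\text{for every }\bar x\in\Omega,\qquad\kappa(l)\to+\infty\ \text{as}\ l\to0^+
\]
(in fact $\kappa(l)\sim c_n\log(1/l)$). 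I expect this estimate --- that the coercive part of $\mathcal L_\Delta$ at a deep interior minimum blows up as the strip narrows, with every quantity remaining well defined despite the unboundedness of $\Sigma_\lambda$ --- to be the main obstacle; the rest is routine moving-planes bookkeeping.

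Granting it, combine $\mathcal L_\Delta W(\bar x)\le\kappa(l)\,W(\bar x)=-\kappa(l)\,|W(\bar x)|$ with \eqref{a10}. If $W=V_\lambda$, the second inequality yields $\mathcal L_\Delta V_\lambda(\bar x)\ge-c_2(\bar x)U_\lambda(\bar x)$, so $c_2(\bar x)U_\lambda(\bar x)\ge\kappa(l)\,|V_\lambda(\bar x)|>0$; since $c_2\le0$ this forces $U_\lambda(\bar x)<0$, whence $|U_\lambda(\bar x)|\le|m|=|W(\bar x)|$ (as $m$ is the overall minimum) and
\[
\kappa(l)\,|W(\bar x)|\ \le\ c_2(\bar x)U_\lambda(\bar x)\ =\ |c_2(\bar x)|\,|U_\lambda(\bar x)|\ \le\ \Bigl(\sup_\Omega|c_2|\Bigr)|W(\bar x)|,
\]
i.e.\ $\kappa(l)\le\sup_\Omega|c_2|$, impossible once $l$ is small. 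The case $W=U_\lambda$ is treated in the same way via the first inequality in \eqref{a10}: using the control on $c_1$, the sign of $c_2$ (applied at a minimum point of $V_\lambda$ when that minimum is negative), and the comparison of the negative minima of $U_\lambda$ and $V_\lambda$, one arrives at $\kappa(l)\le\sup_\Omega|c_1|$ or at $\kappa(l)^2\le\sup_\Omega|c_1|\cdot\sup_\Omega|c_2|$, again impossible for small $l$. This proves \eqref{a11}. Finally, once $U_\lambda,V_\lambda\ge0$ in $\Omega$, if both vanish at a point $x^o\in\Omega$, evaluating \eqref{a10} at $x^o$ kills the coupling terms and, arguing exactly as in \eqref{a8}--\eqref{a9}, gives $C_n\int_{\mathbb R^n}\tfrac{U_\lambda(y)}{|x^o-y|^n}dy=C_n\int_{\mathbb R^n}\tfrac{V_\lambda(y)}{|x^o-y|^n}dy=0$, forcing $U_\lambda=V_\lambda\equiv0$ a.e.\ in $\mathbb R^n$.
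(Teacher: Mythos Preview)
Your overall strategy coincides with the paper's: argue by contradiction at a negative interior minimum, derive an upper bound $\mathcal L_\Delta W(\bar x)\le(\text{coefficient})\cdot W(\bar x)$ with a coefficient that blows up as $l\to0$, and contradict \eqref{a10}. Two places, however, are handled differently in the paper.

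\textbf{The blow-up estimate.} The ``main obstacle'' you flag is resolved in the paper without any cancellation argument. Under the standing hypothesis $h_{\Sigma_\lambda}(\bar x)+\rho_n\ge0$, the contribution $[h_{\Sigma_\lambda}(\bar x)+\rho_n]\,W(\bar x)$ inside your $P(\bar x)W(\bar x)$ is nonpositive and is simply discarded; one keeps only $C_nW(\bar x)\int_{\Sigma_\lambda}|\bar x-y^\lambda|^{-n}\,dy$. The paper then bounds this integral from below by restricting to the finite box $H=\{y:\ l<y_1-\bar x_1<1,\ |y'-\bar x'|<1\}$ and computes (with $s=y_1-\bar x_1$, $\tau=|y'-\bar x'|$, then $\tau=st$)
\[
\int_{\Sigma_\lambda}\frac{dy}{|\bar x-y^\lambda|^n}\ \ge\ \int_l^1\!\!\int_0^1\frac{\omega_{n-2}\,\tau^{n-2}}{(s^2+\tau^2)^{n/2}}\,d\tau\,ds\ \ge\ C\int_l^1\frac{ds}{s}\ \longrightarrow\ +\infty\quad(l\to0^+).
\]
This gives $\kappa(l)\sim C\log(1/l)$ directly; no interplay with $h_{\Sigma_\lambda}$ is invoked.

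\textbf{The coupling argument.} Your case $W=V_\lambda$ is clean and matches the spirit of the paper, but your case $W=U_\lambda$ has a gap: you appeal to $\sup_\Omega|c_1|$, yet only a \emph{lower} bound on $c_1$ (and $c_2\le0$) is assumed, so $\sup_\Omega|c_1|$ need not be finite. The paper does not work at a single overall minimum. It takes $x^o$ a negative minimum of $U_\lambda$; when $V_\lambda(x^o)<0$ it introduces a \emph{second} point $\hat x$ realizing $\min_{\bar\Omega}V_\lambda<0$, applies the blow-up estimate at $\hat x$ to the second line of \eqref{a10}, uses $c_2\le0$ and $U_\lambda(\hat x)\ge U_\lambda(x^o)$ to replace $U_\lambda(\hat x)$ by $U_\lambda(x^o)$, and \emph{adds} the two inequalities to obtain
\[
\Bigl[C\!\int_l^1\!\tfrac{ds}{s}+c_2(\hat x)\Bigr]U_\lambda(x^o)+\Bigl[C\!\int_l^1\!\tfrac{ds}{s}+c_1(x^o)\Bigr]V_\lambda(\hat x)\ \ge\ 0.
\]
Since $U_\lambda(x^o)<0$ and $V_\lambda(\hat x)<0$, one bracket must be $\le0$, which for small $l$ contradicts only the \emph{lower} bounds on $c_1,c_2$. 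This two-point addition is the device missing from your sketch.
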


\begin{proof}
If (\ref{a11}) does not hold then there exists a point $x^{o}\in\bar{\Omega}$ such that
$$U_{\lambda}(x^{o})=\min_{x\in\bar{\Omega}}U_{\lambda}(x)<0\,\,\text{or}\,\, V_{\lambda}(x^{o})=\min_{x\in\bar{\Omega}}V_{\lambda}(x)<0,$$
and one can further deduce using the assumption $U_\lambda \geq  0, V_\lambda \geq 0$ on $\Sigma_{\lambda} \backslash \Omega$ from (\ref{a5}) that $x^{o}$ is in the interior of $\Omega$.

Without loss of generality, we suppose that $U_{\lambda}(x^{o})<0$. Let us first consider the case when $V_{\lambda}(x^{o})\geq0$. By performing similar calculations as in (\ref{a7}), we get
\begin{equation}\label{a12}
\begin{aligned}
\mathcal{L}_{\Delta}U_{\lambda}(x^{o})\leq C_{n}\int_{\Sigma_{\lambda}}\frac{U_{\lambda}(x^{o}) (1_{B_{1}(x^{o})}(y)+1_{B_{1}(x^{o})}(y^\lambda))}{|x^{o}-y^{\lambda}|^{n}}dy+\rho_{n}U_{\lambda}(x^{o}).
\end{aligned}
\end{equation}

Let $H=\{y\mid l<y_{1}-x_{1}^{o}<1, |y'-(x^{o})'|<1\}$, $s=y_{1}-x_{1}^{o}$, $\tau=|y'-(x^{o})'|$ and $\omega_{n-2}=|B_{1}(0)|$ in $\mathbb{R}^{n-2}$. Now we have
\begin{equation}\label{a13}
\begin{aligned}
\int_{\Sigma_{\lambda}}\frac{1}{|x^{o}-y^{\lambda}|^{n}}dy&\geq\int_{H}\frac{1}{|x^{o}-y^{\lambda}|^{n}}dy=\int_{l}^{1}\int_{0}^{1}\frac{\omega_{n-2}\tau^{n-2}}{(s^{2}+\tau^{2})^{\frac{n}{2}}}d\tau ds\\
&\geq\int_{l}^{1}\frac{1}{s}\int_{0}^{1}\frac{\omega_{n-2}t^{n-2}}{(1+t^{2})^{\frac{n}{2}}}dt ds\geq C\int_{l}^{1}\frac{1}{s}ds\rightarrow\infty,\,\,\,\text{as}\,\,l\rightarrow0,
\end{aligned}
\end{equation}
where we have used the substitution $\tau=st$.

Since $c_{1}(x)$ is lower bounded in $\Omega$ and $U_\lambda(x^{o})<0$, we get, using   \eqref{a13}, that
\begin{equation}\label{a14}
\begin{aligned}
\mathcal{L}_{\Delta}U_{\lambda}(x^{o})+c_{1}(x^{o})V_{\lambda}(x^{o})&\leq CU_{\lambda}(x^{o})\int_{l}^{1}\frac{1}{s}ds+c_{1}(x^{o})V_{\lambda}(x^{o})\\
&=U_{\lambda}(x^{o})\Bigg(C\int_{l}^{1}\frac{1}{s}ds+c_{1}(x^{o})\frac{V_{\lambda}(x^{o})}{U_{\lambda}(x^{o})}\Bigg)<0,
\end{aligned}
\end{equation}
when $l$ sufficiently small. This is contradiction for a condition in (\ref{a10}). Therefore, (\ref{a11}) must be true.

Now, let us consider the case when $V_{\lambda}(x^{o})<0$, by (\ref{a10}) and (\ref{a13}), we have
\begin{equation}\label{a15}
\begin{aligned}
0\leq\mathcal{L}_{\Delta}U_{\lambda}(x^{o})+c_{1}(x^{o})V_{\lambda}(x^{o})\leq CU_{\lambda}(x^{o})\int_{l}^{1}\frac{1}{s}ds+c_{1}(x^{o})V_{\lambda}(x^{o}).
\end{aligned}
\end{equation}
Since $V_{\lambda}(x^{o})<0$, there exists $\hat{x}\in\Omega$ such that
$$V_{\lambda}(\hat{x})=\min_{\bar{\Omega}}V_{\lambda}(x)<0.$$
Similarly to (\ref{a15}), by (\ref{a10}) and $c_{2}(x)\leq0$, we get
\begin{equation}\label{a16}
\begin{aligned}
\mathcal{L}_{\Delta}V_{\lambda}(\hat{x})+c_{2}(\hat{x})U_{\lambda}(\hat{x})&\leq CV_{\lambda}(\hat{x})\int_{l}^{1}\frac{1}{s}ds+c_{2}(\hat{x})U_{\lambda}(\hat{x})\\
&\leq CV_{\lambda}(\hat{x})\int_{l}^{1}\frac{1}{s}ds+c_{2}(\hat{x})U_{\lambda}(x^{o}).
\end{aligned}
\end{equation}

Adding (\ref{a15}) and (\ref{a16}), we get
\begin{equation}\label{a17}
\begin{aligned}
\Bigg(C\int_{l}^{1}\frac{1}{s}ds+c_{2}(\hat{x})\Bigg)U_{\lambda}(x^{o})+
\Bigg(C\int_{l}^{1}\frac{1}{s}ds+c_{1}(x^{o})\Bigg)V_{\lambda}(\hat{x})\geq0.
\end{aligned}
\end{equation}
Since $U_{\lambda}(x^{o})<0$ and $V_{\lambda}(\hat{x})<0$, then at least one of the following holds:
\begin{equation}\label{a18}
C\int_{l}^{1}\frac{1}{s}ds+c_{2}(\hat{x})\leq0,\ or \ C\int_{l}^{1}\frac{1}{s}ds+c_{1}(x^{o})\leq0.
\end{equation}
However, when $l$ sufficiently small, from the fact that $c_{i}(x),i=1,2$ are both bounded from below in $\Omega$, we obtain
\begin{equation}\label{a19}
C\int_{l}^{1}\frac{1}{s}ds+c_{2}(\hat{x})>0,\ and \ C\int_{l}^{1}\frac{1}{s}ds+c_{1}(x^{o})>0,
\end{equation}
which is a contradiction with (\ref{a18}).
Therefore, (\ref{a11}) must be true. 

This completes the proof.
\end{proof}

The following decay at infinity is also indispensable for further use in the main result.

\begin{theorem} \label{le6} (Decay at infinity)
Assume that $\Omega$ is an unbounded Lipschitz domain in $\Sigma_{\lambda}$. Suppose that $U_{\lambda} \in  L_{0}^{1}(\Omega)$ and $V_{\lambda} \in  L_{0}^{1}(\Omega)$ are Dini continuous functions in $\Omega$ and  continuous functions in $\bar{\Omega}$ such that the pair $(U_\lambda, V_\lambda)$ satisfies 
\begin{equation}\label{a20}
\begin{cases}
\ \mathcal{L}_{\Delta}U_{\lambda}(x)+c_{1}(x)V_{\lambda}(x)\geq0 ,& in\ \Omega,\\
\ \mathcal{L}_{\Delta}V_{\lambda}(x)+c_{2}(x)U_{\lambda}(x)\geq0 ,& in\ \Omega,\\
\ U_{\lambda}(x)\geq0 ,\  V_{\lambda}(x)\geq0,& in\ \Sigma_{\lambda}\backslash \Omega,\\
\ U_{\lambda}(x^{\lambda})=-U(x), V_{\lambda}(x^{\lambda})=-V(x), & in\  \Sigma_{\lambda},
\end{cases}
\end{equation}
with
$c_{i}(x)\leq0\,\,\,\text{for all}\,\,x \in \Omega, $
and
\begin{equation}\label{a21}
c_{i}(x)= O\bigg(\frac{1}{|x|^{\kappa_{i}}}\bigg),\,\,\text{for some}\,\,k_i>0,\,\,\text{as}\,\, |x|\rightarrow \infty,\,\,\ \text{for}\, \ i=1,2.
\end{equation}
Then, there exists a constant $R_{o}>0$ such that if
$$U_{\lambda}(x^{o})=\min_{x\in\bar{\Omega}}U_{\lambda}(x)<0,\ or
\ V_{\lambda}(x^{o})=\min_{x\in\bar{\Omega}}V_{\lambda}(x)<0,$$
then
\begin{equation}\label{a22}
|x^{o}|\leq R_{o}.
\end{equation}

\end{theorem}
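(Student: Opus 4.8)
The plan is to run the same contradiction scheme as in the proofs of Theorems~\ref{le4} and~\ref{le5}, but to exploit a feature peculiar to the logarithmic kernel $|x-y|^{-n}$: at an interior negative minimum of an anti-symmetric function, $\mathcal{L}_{\Delta}$ bounds that value from above by a \emph{fixed} negative multiple of it, with no decay in the space variable, and this already suffices once it is paired with the decay of $c_{i}$ at \emph{any} positive rate. Concretely, suppose first that $U_{\lambda}(x^{o})=\min_{\bar\Omega}U_{\lambda}<0$. Exactly as in Theorem~\ref{le4}, using $U_{\lambda}\ge0$ on $\Sigma_{\lambda}\setminus\Omega$, $U_{\lambda}\equiv0$ on $T_{\lambda}$, and the continuity of $U_{\lambda}$ on $\bar\Omega$, the point $x^{o}$ lies in the interior of $\Omega$, so $\mathcal{L}_{\Delta}U_{\lambda}(x^{o})$ is given by the pointwise formula with $\mathfrak{O}=\Sigma_{\lambda}$.

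The first step is a uniform one-point estimate. Repeating the computation in~\eqref{a7}, anti-symmetry together with $U_{\lambda}(x^{o})-U_{\lambda}(y)\le0$ for $y\in\Sigma_{\lambda}$, $|x^{o}-y|\le|x^{o}-y^{\lambda}|$, and $h_{\Sigma_{\lambda}}(x^{o})+\rho_{n}\ge0$ gives
$$\mathcal{L}_{\Delta}U_{\lambda}(x^{o})\le C_{n}\,U_{\lambda}(x^{o})\int_{\Sigma_{\lambda}}\frac{dy}{|x^{o}-y^{\lambda}|^{n}}.$$
Setting $d:=\lambda-x^{o}_{1}=\operatorname{dist}(x^{o},T_{\lambda})>0$, one has $B_{d}(x^{o})\subset\Sigma_{\lambda}$ and, for $y\in B_{d}(x^{o})$, $|x^{o}-y^{\lambda}|\le|x^{o}-(x^{o})^{\lambda}|+|(x^{o})^{\lambda}-y^{\lambda}|\le 2d+|x^{o}-y|\le 3d$, whence $\int_{\Sigma_{\lambda}}|x^{o}-y^{\lambda}|^{-n}dy\ge|B_{d}(x^{o})|/(3d)^{n}=\omega_{n}3^{-n}=:c_{0}$ (with $\omega_{n}=|B_{1}(0)|$), a constant independent of $\lambda$ and $x^{o}$. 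Since $U_{\lambda}(x^{o})<0$, this yields $\mathcal{L}_{\Delta}U_{\lambda}(x^{o})\le C_{n}c_{0}\,U_{\lambda}(x^{o})<0$, and the same estimate holds for $V_{\lambda}$ at its minimum point over $\bar\Omega$.

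Next I would feed this into the system. From the first line of~\eqref{a20}, $\mathcal{L}_{\Delta}U_{\lambda}(x^{o})\ge -c_{1}(x^{o})V_{\lambda}(x^{o})$; if $V_{\lambda}(x^{o})\ge0$ the right-hand side is $\ge0$ (as $c_{1}\le0$), contradicting the previous step, so $V_{\lambda}(x^{o})<0$ and $|c_{1}(x^{o})|\,|V_{\lambda}(x^{o})|\ge C_{n}c_{0}|U_{\lambda}(x^{o})|$. Letting $\hat{x}$ be a point where $V_{\lambda}$ attains its minimum over $\bar\Omega$ (which again lies in the interior of $\Omega$), the second line of~\eqref{a20} together with $c_{2}\le0$ and $U_{\lambda}(\hat{x})\ge U_{\lambda}(x^{o})$ gives, via the one-point estimate for $V_{\lambda}$ at $\hat{x}$, that $|c_{2}(\hat{x})|\,|U_{\lambda}(x^{o})|\ge C_{n}c_{0}|V_{\lambda}(\hat{x})|\ge C_{n}c_{0}|V_{\lambda}(x^{o})|$. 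Multiplying the two inequalities and cancelling the positive factor $|U_{\lambda}(x^{o})|\,|V_{\lambda}(x^{o})|$ leaves $|c_{1}(x^{o})|\,|c_{2}(\hat{x})|\ge(C_{n}c_{0})^{2}$. As $c_{1},c_{2}$ are bounded (being $\le0$ and $O(|x|^{-\kappa_{i}})$ at infinity), this forces $|c_{1}(x^{o})|\ge(C_{n}c_{0})^{2}/\sup|c_{2}|>0$, and then the decay hypothesis~\eqref{a21} yields $|x^{o}|\le R_{o}$ for a suitable $R_{o}>0$; the case $V_{\lambda}(x^{o})=\min_{\bar\Omega}V_{\lambda}<0$ is entirely symmetric.

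I expect the main obstacle to be the careful use of the representation~\eqref{a3} on the \emph{unbounded} set $\Sigma_{\lambda}$: there $h_{\Sigma_{\lambda}}$ and the individual integrals are only formally defined, and the computation in~\eqref{a7} is legitimate precisely because the anti-symmetric pieces are grouped so that the divergent parts cancel — this is exactly why the standing hypothesis $h_{\Sigma_{\lambda}}(x)+\rho_{n}\ge0$ is imposed and why the estimate must be carried out as in~\eqref{a7} rather than split further. A secondary point of care is that one cannot reduce to controlling $V_{\lambda}$ by $U_{\lambda}$ at $x^{o}$ alone (that would only work if $x^{o}$ realised the joint minimum of the pair), so the second minimum point $\hat{x}$ and the boundedness of the $c_{i}$ are genuinely needed to close the argument for the statement as phrased.
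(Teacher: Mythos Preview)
Your argument is correct and follows the same contradiction scheme as the paper's, but with two genuine differences. For the key lower bound on $\int_{\Sigma_{\lambda}}|x^{o}-y^{\lambda}|^{-n}\,dy$, you use the intrinsic ball $B_{d}(x^{o})$ with $d=\operatorname{dist}(x^{o},T_{\lambda})$ to obtain the constant $\omega_{n}/3^{n}$ with no side condition; the paper instead places a ball $B_{|x^{o}|}(x^{1})\subset\mathbb{R}^{n}\setminus\Sigma_{\lambda}$ with $x^{1}=(3|x^{o}|+x_{1}^{o},(x^{o})')$, giving $\omega_{n}/4^{n}$ under the extra restriction $|x^{o}|\ge\lambda$. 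More substantively, in the case $V_{\lambda}(x^{o})<0$ you correctly introduce a separate minimum point $\hat{x}$ for $V_{\lambda}$ and combine the two one-point estimates \emph{multiplicatively} to reach $|c_{1}(x^{o})|\,|c_{2}(\hat{x})|\ge(C_{n}c_{0})^{2}$; the paper instead applies the one-point estimate for $V_{\lambda}$ directly at $x^{o}$ (which is, strictly speaking, only justified when $x^{o}$ is also a minimum of $V_{\lambda}$) and \emph{adds} the two inequalities to force one of the brackets $\frac{C_{n}\omega_{n}}{4^{n}}+h_{\Sigma_{\lambda}}(x^{o})+\rho_{n}+c_{i}(x^{o})$ to be nonpositive. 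Your two-minimum treatment is the more careful one and mirrors what is done in the narrow region principle (Theorem~\ref{le5}).

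One small caveat: your closing step divides by $\sup|c_{2}|$, which requires $c_{2}$ to be globally bounded. The stated hypotheses only give $c_{i}\le0$ and the decay~\eqref{a21} at infinity, so boundedness on compact sets is an implicit extra assumption. This is harmless in the applications of the paper (where $c_{i}=-p_{i}u_{i+1}^{p_{i}-1}$ are continuous), but it is worth flagging since the additive route of the paper avoids it: from your product inequality one can equally conclude that at least one of $|c_{1}(x^{o})|$, $|c_{2}(\hat{x})|$ is at least $C_{n}c_{0}$, and then the decay of both $c_{i}$ bounds at least one of $|x^{o}|,|\hat{x}|$ directly, which already suffices for the moving-planes argument in Step~1 of Theorem~\ref{th1}.
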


\begin{proof}
Without loss of generality, we suppose that $U_{\lambda}(x^{o})<0$. If $V_{\lambda}(x^{o})\geq0$, similarly to (\ref{a7}), we have 
\begin{equation}\label{a23}
\begin{aligned}
\mathcal{L}_{\Delta}U_{\lambda}(x^{o})\leq C_{n}\int_{\Sigma_{\lambda}}\frac{U_{\lambda}(x^{o}) (1_{B_{1}(x^{o})}(y)+1_{B_{1}(x^{o})}(y^\lambda))}{|x^{o}-y^{\lambda}|^{n}}dy+\rho_{n}U_{\lambda}(x^{o}).
\end{aligned}
\end{equation}

Suppose that \eqref{a22} does not hold. Then, for each fixed $\lambda$, when $|x^{o}|\geq\lambda$, we have $B_{|x^{o}|}(x^{1})\subset\mathbb{R}^{n}\backslash\Sigma_{\lambda}$ with $x^{1}=(3|x^{o}|+x_{1}^{o},(x^{o})')$, and it follows that
\begin{equation}\label{a24}
\begin{aligned}
\int_{\Sigma_{\lambda}}\frac{(1_{B_{1}(x^{o})}(y)+1_{B_{1}(x^{o})}(y^\lambda))}{|x^{o}-y^{\lambda}|^{n}}dy&\geq\int_{B_{|x^{o}|}(x^{1})}\frac{1}{|x^{o}-y|^{n}}dy\geq\int_{B_{|x^{o}|}(x^{1})}\frac{1}{4^{n}|x^{o}|^{n}}dy=\frac{\omega_{n}}{4^{n}},
\end{aligned}
\end{equation}
where (\ref{a24}) follows from $|x^{o}-y|\leq|x^{o}-x_{1}|+|x^{o}|=4|x^{o}|$ for all $y\in B_{|x^{o}|}(x^{1})$. 

Then we have from \eqref{a23} and \eqref{a24} using the fact that $U_\lambda (x^o)<0$ that
\begin{equation}\label{a25}
\begin{aligned}
0&\leq \mathcal{L}_{\Delta}U_{\lambda}(x^{o})+c_{1}(x^{o})V_{\lambda}(x^{o})\\
&\leq \left(\frac{C_n\omega_{n}}{4^{n}}+\rho_{n}\right)U_{\lambda}(x^{o})+c_{1}(x^{o})V_{\lambda}(x^{o}).
\end{aligned}
\end{equation}

Since  $c_1(x^{o})\leq 0$  from (\ref{a21}), we have
\begin{equation}\label{a26}
\left(\frac{C_n\omega_{n}}{4^{n}}+\rho_{n}\right)U_{\lambda}(x^{o})+c_{1}(x^{o})V_{\lambda}(x^{o})<0,
\end{equation}
which contradicts  \eqref{a25}.

If $V_{\lambda}(x^{o})<0$, we obtain
\begin{equation}\label{a27}
\begin{aligned}
0&\leq \mathcal{L}_{\Delta}U_{\lambda}(x^{o})+c_{1}(x^{o})V_{\lambda}(x^{o})\\
&\leq \left(\frac{C_n\omega_{n}}{4^{n}}+\rho_{n}\right)U_{\lambda}(x^{o})+c_{1}(x^{o})V_{\lambda}(x^{o}),
\end{aligned}
\end{equation}
and
\begin{equation}\label{a28}
\begin{aligned}
0&\leq \mathcal{L}_{\Delta}V_{\lambda}(x^{o})+c_{2}(x^{o})U_{\lambda}(x^{o})\\
&\leq \left(\frac{C_n\omega_{n}}{4^{n}}+\rho_{n}\right)V_{\lambda}(x^{o})+c_{2}(x^{o})U_{\lambda}(x^{o}).
\end{aligned}
\end{equation}

Adding (\ref{a27}) and (\ref{a28}), we get
\begin{equation}\label{a29}
\begin{aligned}
\left(\frac{C_n\omega_{n}}{4^{n}}+\rho_{n}+c_{2}(x^{o}) \right)U_{\lambda}(x^{o})
+\left(\frac{C_n\omega_{n}}{4^{n}}+\rho_{n}+c_{1}(x^{o}) \right)V_{\lambda}(x^{o})\geq0.
\end{aligned}
\end{equation}

Since $U_{\lambda}(x^{o})<0$ and $V_{\lambda}(x^{o})<0$, if (\ref{a29}) holds, at least one of
\begin{equation}\label{a30}
\frac{C_n\omega_{n}}{4^{n}}+\rho_{n}+c_{2}(x^{o})\leq0, \ or\
\frac{C_n\omega_{n}}{4^{n}}+\rho_{n}+c_{1}(x^{o})\leq0,
\end{equation}
holds. However, if $|x^{o}|$ is sufficiently large, it follows from (\ref{a21})   that
$$\frac{C_n\omega_{n}}{4^{n}}+\rho_{n}+c_{i}(x^{o})>0,\ i=1,2,$$
which is a contradiction with \eqref{a30}.

Thus, we get a contradiction in the case when $V_\lambda(x^{o})\geq 0$ as well as when $V_\lambda(x^{o})\leq 0.$ Therefore, we conclude that (\ref{a22}) holds. 

This completes the proof.
\end{proof}

\section{The proof of the main results}
In this section, we will present the proof of the main results.

\begin{proof}[Proof of Theorem \ref{th1}]
Let
$U_{\lambda}(x)=u_{\lambda}(x)-u(x),\quad V_{\lambda}(x)=v_{\lambda}(x)-v(x).$

$\mathbf{Step\ 1.}$ We show that, for $\lambda$ sufficiently negative,
\begin{equation}\label{b3}
U_{\lambda}(x)\geq0\ and\ V_{\lambda}(x)\geq0,\quad x\in\Sigma_{\lambda}.
\end{equation}

First, notice that, by the definition of $U_{\lambda}$ and $V_{\lambda}$, we have
\begin{equation}\label{b4}
\lim_{|x|\rightarrow\infty}U_{\lambda}(x)=0,\quad \lim_{|x|\rightarrow\infty}V_{\lambda}(x)=0.
\end{equation}
Thus, if $U_{\lambda}$ or $V_{\lambda}$ is negative somewhere in $\Sigma_{\lambda}$, then the negative minima of $U_{\lambda}$ or $V_{\lambda}$ would be attained in the interior of $\Sigma_{\lambda}$.

From \eqref{c1}, at points where $V_{\lambda}(x)$ is negative, we have
\begin{equation}\label{b5}
\begin{aligned}
\mathcal{L}_{\Delta}U_{\lambda}(x)&=v_{\lambda}^{p}(x)-v^{p}(x)\\
&\geq p v^{p-1}(x)V_{\lambda}(x),
\end{aligned}
\end{equation}
where \eqref{b5} follows from the mean value theorem. Therefore, we get
\begin{equation}\label{b6}
\begin{aligned}
\mathcal{L}_{\Delta}U_{\lambda}(x)+c_{1}(x)V_{\lambda}(x)\geq0,
\end{aligned}
\end{equation}
with
\begin{equation}\label{b7}
c_{1}(x)=-p v^{p-1}(x).
\end{equation}
Likewise, when $U_{\lambda}(x)$ is negative at some points, we similarly have
\begin{equation}\label{b8}
\begin{aligned}
\mathcal{L}_{\Delta}V_{\lambda}(x)+c_{2}(x)U_{\lambda}(x)\geq0,
\end{aligned}
\end{equation}
with
\begin{equation}\label{b9}
c_{2}(x)=-q u^{q-1}(x).
\end{equation}
It is easy to verify using \eqref{b2}, \eqref{b3} and \eqref{b7} that, for $|x|$ sufficiently large,
\begin{equation}\label{b10}
c_{1}(x)=O\left(\frac{1}{|x|^{\beta(p-1)}}\right).
\end{equation}
In the same way, we obtain, for $|x|$ sufficiently large, that
\begin{equation}\label{b11}
c_{2}(x)=O\left(\frac{1}{|x|^{\alpha(q-1)}}\right).
\end{equation}

Moreover, it follows from \eqref{b6} and \eqref{b8} that $c_{i}(x)\leq0,i=1,2.$ Hence, $c_{i}(x)$ satisfy \eqref{a21} in Theorem \ref{le6}.
Therefore, we conclude using Theorem \ref{le6} that, there exists an $R_{o}>0$ (independent of $\lambda$), such that if $\bar{x}$ is a negative minimum of $U_{\lambda}$ or $V_{\lambda}$ in $\Sigma_{\lambda}$, then
\begin{equation}\label{b12}
|\bar{x}|\leq R_{o}.
\end{equation}

Now, for $\lambda\leq-R_{o}$, we must have
$$U_{\lambda}(x)\geq0,\ and\ V_{\lambda}(x)\geq0,\quad x\in\Sigma_{\lambda}.$$
$\mathbf{Step\ 2.}$
Step 1 provides a starting point, from which we can now move the plane $T_{\lambda}$ to the right as long as \eqref{b3} holds to its limiting position.

Let us define the following 
$$\lambda_{0}=\sup\{\lambda<x_{1}^{o}\,\mid U_{\mu}(x)\geq0,\ V_{\mu}(x)\geq0,\ \forall x\in \Sigma_{\mu},\  \mu\leq\lambda\}.$$
In this part, our aim is to show that
$$\lambda_{0}=x_{1}^{o},$$
and
\begin{equation}\label{b13}
U_{\lambda_{0}}(x)\equiv0,\ and\ \ V_{\lambda_{0}}(x)\equiv0,\quad x\in\Sigma_{\lambda_{0}}.
\end{equation}

Suppose that $\lambda_{0}<x_{1}^{o}$, we show that the plane $T_{\lambda}$ can be moved further right, that is, there exists some $\epsilon>0$, such that for any $\lambda\in(\lambda_{0},\lambda_{0}+\epsilon)$, we have
\begin{equation}\label{b14}
U_{\lambda}(x)\geq0,\ and\ V_{\lambda}(x)\geq0,\quad x\in\Sigma_{\lambda}.
\end{equation}
This would be a contradiction with the definition of $\lambda_{0}$. Therefore, we must have
\begin{equation}\label{b15}
\lambda_{0}=x_{1}^{o}.
\end{equation}

Now we prove \eqref{b13} by combining the use of the narrow region principle (Theorem \ref{le5}) and decay at infinity (Theorem \ref{le6}).

By \eqref{b12}, the negative minimum of $U_{\lambda}(x)$ cannot be attained outside of $B_{R_{o}}(0)$. Now, we will show that it can not be attained inside of $B_{R_{o}}(0)$ as well. That is, we will show that for $\lambda$ sufficiently close to $\lambda_{0}$,
\begin{equation}\label{b16}
U_{\lambda}(x)\geq0,\ and\ V_{\lambda}(x)\geq0,\quad x\in\Sigma_{\lambda}\cap B_{R_{o}(0)}.
\end{equation}

According to Theorem \ref{le5}, there is a small $\delta>0$, such that for $\lambda\in[\lambda_{0},\lambda_{0}+\delta)$, if
\begin{equation}\label{b17}
U_{\lambda}(x)\geq0,\ and\ V_{\lambda}(x)\geq0,\quad x\in\Sigma_{\lambda_{0}-\delta},
\end{equation}
then
\begin{equation}\label{b18}
U_{\lambda}(x)\geq0,\ and\ V_{\lambda}(x)\geq0,\quad x\in\Sigma_{\lambda}\backslash \Sigma_{\lambda_{0}-\delta}.
\end{equation}

Then what is left is to show \eqref{b17}, and actually, we only need
\begin{equation}\label{b19}
U_{\lambda}(x)\geq0,\ and\ V_{\lambda}(x)\geq0,\quad x\in\Sigma_{\lambda_{0}-\delta}\cap B_{R_{o}(0)}.
\end{equation}

In fact, when $\lambda_{0}<x_{1}^{o}$, we have
\begin{equation}\label{b20}
U_{\lambda_{0}}(x)>0,\ and\ V_{\lambda_{0}}(x)>0,\quad x\in\Sigma_{\lambda_{0}}.
\end{equation}
If not, there exists some $\tilde{x}$ such that
$$U_{\lambda_{0}}(\tilde{x})=\min_{\Sigma_{\lambda_{0}}}U_{\lambda_{0}}(x)=0,\,\,\text{or}\,\,
\ V_{\lambda_{0}}(\tilde{x})=\min_{\Sigma_{\lambda_{0}}}V_{\lambda_{0}}(x)=0.$$

Without loss of generality, we suppose that $$U_{\lambda_{0}}(\tilde{x})=\min_{\Sigma_{\lambda_{0}}}U_{\lambda_{0}}(x)=0.$$ 
For $V_{\lambda_{0}}(\tilde{x})\geq0$, since $|\tilde{x}-y|>|\tilde{x}-y^{\lambda_{0}}|$, it follows that
\begin{equation}\label{b21}
\begin{aligned}
\mathcal{L}_{\Delta}U_{\lambda_{0}}(\tilde{x})&=C_{n}\int_{\Sigma_{\lambda_{0}}}\frac{-U_{\lambda_{0}}(y)}{|\tilde{x}-y|^{n}}dy
-C_{n}\int_{\mathbb{R}^{n}\backslash \Sigma_{\lambda_{0}}}\frac{U_{\lambda_{0}}(y)}{|\tilde{x}-y|^{n}}dy\\
&=C_{n}\int_{\Sigma_{\lambda_{0}}}\Big(\frac{1}{|\tilde{x}-y^{\lambda_{0}}|^{n}}-\frac{1}{|\tilde{x}-y|^{n}}\Big)U_{\lambda_{0}}(y)dy\\
&<0.
\end{aligned}
\end{equation}
On the other hand,
\begin{equation}\label{b22}
\begin{aligned}
\mathcal{L}_{\Delta}U_{\lambda_{0}}(\tilde{x})&=v_{\lambda_{0}}^{p}(\tilde{x})-v^{p}(\tilde{x})\\
&\geq pv^{p-1}(\tilde{x})V_{\lambda_{0}}(\tilde{x})\\
&\geq0,
\end{aligned}
\end{equation}
which is a contradiction with \eqref{b21}.

Therefore, we get \eqref{b20}. It follows from \eqref{b20} and Theorem \ref{le4} that there exists a constant $c_{0}>0$, such that
$$U_{\lambda_{0}}(x)\geq c_{0},\ and\ V_{\lambda_{0}}(x)\geq c_{0},\quad x\in \overline{\Sigma_{\lambda_{0}-\delta}\cap B_{R_{o}(0)}}.$$
Since $U_{\lambda}$ and $V_{\lambda}$ both depend on $\lambda$ continuously, there exists $0<\epsilon<\delta$, such that for all $\lambda\in(\lambda_{0},\lambda_{0}+\epsilon)$, we have
\begin{equation}\label{b23}
U_{\lambda}(x)\geq c_{0},\ and\ V_{\lambda}(x)\geq c_{0},\quad x\in \overline{\Sigma_{\lambda-\delta}\cap B_{R_{o}(0)}}.
\end{equation}

Combining \eqref{b11}, \eqref{b18} and \eqref{b23}, we conclude that for all $\lambda\in(\lambda_{0},\lambda_{0}+\epsilon)$,
$$U_{\lambda}(x)\geq 0,\ and\ V_{\lambda}(x)\geq 0,\quad x\in\Sigma_{\lambda}.$$
This contradicts the definition of $\lambda_{0}$, Therefore, we must have
$$\lambda_{0}=x_{1}^{o},\ \ and\ \ U_{\lambda_{0}}(x)\geq 0,\ and\ V_{\lambda_{0}}(x)\geq 0,\quad x\in\Sigma_{\lambda_{0}}.$$

Similarly, one can move the plane $T_{\lambda}$ from $+\infty$ to the left and show that
$$U_{\lambda_{0}}(x)\leq 0,\ and\ V_{\lambda_{0}}(x)\leq 0,\quad x\in\Sigma_{\lambda_{0}}.$$

Thus,  we have shown that
$$\lambda_{0}=x_{1}^{o},\ \ and\ \ U_{\lambda_{0}}(x)\equiv0,\ and\ V_{\lambda_{0}}(x)\equiv0,\quad x\in\Sigma_{\lambda_{0}},$$
completing the proof of {\bf Step 2}.

Since the $x_{1}$-direction can be chosen arbitrarily, we have actually shown that $(u,v)$
are radially symmetric about $x^{o}$. The monotonicity of $(u,v)$ follows from the process of moving the planes.

This is complete the proof of Theorem \ref{th1}.
\end{proof}
For the convenience of the reader, we present a concise proof of Theorem \ref{thm} as it proceeds similar to the proof of Theorem \ref{th1}.

\begin{proof}[Proof of Theorem \ref{thm}] The proof will again be based on  the moving planes method. For this, purpose, 
let us define
$$W_{i,\lambda}(x):=u_{i,\lambda}(x)-u_{i}(x),$$
where $u_{i,\lambda}(x)=u_{i}(x^{\lambda})$, and $i=1,2,\ldots, m$.

$\mathbf{Step\ 1.}$ We show that, for $\lambda$ sufficiently negative,
\begin{equation*}\label{w1}
W_{i,\lambda}(x)\geq0,\quad \forall x\in\Sigma_{\lambda}\quad \forall \,i=1,2,\ldots, m.
\end{equation*}

By the definition of $W_{i,\lambda}(x),\, i=1,2,\ldots, m$, we conclude
\begin{equation}\label{w2}
\lim_{|x|\rightarrow\infty}W_{i,\lambda}(x)=0.
\end{equation}

From \eqref{m}, at points where $W_{j+1,\lambda}(x)$ is negative, we have
\begin{equation*}\label{w3}
\begin{aligned}
\mathcal{L}_{\Delta}W_{j,\lambda}(x)&=u_{j+1,\lambda}^{p_{j}}(x)-u_{j+1}^{p_{j}}(x)\\
&\geq p_{j}u_{j+1}^{p_{j}-1}(x)W_{j+1,\lambda}(x),
\end{aligned}
\end{equation*}
for $j=1,\cdot\cdot\cdot,m-1$, and 
\begin{equation}\label{w4}
\mathcal{L}_{\Delta}W_{m,\lambda}(x)\geq p_{m}u_{1}^{p_{m}-1}(x)W_{1,\lambda}(x).
\end{equation}

Therefore, for $j=1,\cdot\cdot\cdot,m-1$, we obtain
\begin{equation*}\label{w5}
\begin{aligned}
\mathcal{L}_{\Delta}W_{j,\lambda}(x)+c_{j}(x)W_{j+1,\lambda}(x)\geq0,
\end{aligned}
\end{equation*}
with
\begin{equation}\label{w6}
c_{j}(x)=-p_{j}u_{j+1}^{p_{j}-1}(x),
\end{equation}
and
\begin{equation*}\label{w7}
\begin{aligned}
\mathcal{L}_{\Delta}W_{m,\lambda}(x)+c_{m}(x)W_{1,\lambda}(x)\geq0,
\end{aligned}
\end{equation*}
with
\begin{equation}\label{w8}
c_{m}(x)=-p_{m}u_{1}^{p_{m}-1}(x).
\end{equation}

It is easy to verify that, for $|x|$ sufficiently large, we have
\begin{equation*}\label{w19}
c_{j }(x)=O\left(\frac{1}{|x|^{\alpha_{j+1}(p_{j }-1)}}\right), \quad \text{for}\,\,\,\,j=1,\ldots,m-1,
\end{equation*}
and
\begin{equation*}\label{w9}
c_{m}(x)=O\left(\frac{1}{|x|^{\alpha_{1}(p_{m }-1)}}\right).
\end{equation*}

Moreover, it follows from \eqref{w6} and \eqref{w8} that $c_{i}(x)\leq0$.
Therefore, we conclude using Theorem \ref{le6} that, there exists an $R_{o}>0$ (independent of $\lambda$), such that, if $\bar{x}$ is a negative minimum of $W_{i,\lambda}(x)$ in $\Sigma_{\lambda}$, then
\begin{equation}\label{w10}
|\bar{x}|\leq R_{o}.
\end{equation}
$\mathbf{Step\ 2.}$
Step 1 provides a starting point, from which we can now move the plane $T_{\lambda}$ to the right as long as \eqref{w1} holds to its limiting position.

Let us define the following 
$$\lambda'_{0}=\sup\{\lambda<x_{1}^{o}\,\mid W_{i,\mu}(x)\geq0,\,\text{for}\,\,\,i=1,\ldots,m,\ \forall x\in \Sigma_{\mu},\  \mu\leq\lambda\}.$$
Similar to the proof of Step 2 in Theorem \ref{th1}, we can get
$$\lambda'_{0}=x_{1}^{o},$$
and
\begin{equation}\label{w11}
W_{i,\lambda'_{0}}(x)\equiv0,\ x\in\Sigma_{\lambda'_{0}},
\end{equation}
completing the proof of Theorem \ref{thm}.
\end{proof}

\section*{Acknowledgments}
RZ is supported by the China Scholarship Council (No.202106860018), the National Natural Science Foundation of China (Grant No. 11871278) and by the National Natural Science
Foundation of China (Grant No. 11571093).
 VK and MR are supported by the FWO Odysseus 1 grant G.0H94.18N: Analysis and Partial
Differential Equations, the Methusalem programme of the Ghent University Special Research Fund (BOF) (Grant number 01M01021) and by FWO Senior Research Grant G011522N. MR is also supported by EPSRC grant
EP/R003025/2.

\end{document}